\newcommand*\patchAmsMathEnvironmentForLineno[1]{%
  \expandafter\let\csname old#1\expandafter\endcsname\csname #1\endcsname
  \expandafter\let\csname oldend#1\expandafter\endcsname\csname end#1\endcsname
  \renewenvironment{#1}%
     {\linenomath\csname old#1\endcsname}%
     {\csname oldend#1\endcsname\endlinenomath}}%
\newcommand*\patchBothAmsMathEnvironmentsForLineno[1]{%
  \patchAmsMathEnvironmentForLineno{#1}%
  \patchAmsMathEnvironmentForLineno{#1*}}%
\newtheorem{thm}{Theorem}[section] 
 \newtheorem{pro}[thm]{Proposition}
\newtheorem{cor}[thm]{Corollary}
\newtheorem*{thm*}{Theorem} \newtheorem*{cnj*}{Conjecture}
\theoremstyle{definition} 
 \newtheorem{defi}[thm]{Definition}
\newtheorem*{rem}{Remark} 
\newtheorem*{conj*}{Conjecture}
 \newcommand{\C}{\mathbb C}
 \newcommand{\p}{\mathbb P}
\numberwithin{equation}{section}
\begin{document}


\title{A Poncelet theorem for lines}

\author{Jean Vall\`es}

\keywords{Poncelet porism, Fr\'egier's involution, Pascal and M\"obius theorems}
\subjclass[2010]{14N15, 14N20, 14L35, 14L30}

\thanks{Author partially supported by ANR-09-JCJC-0097-0 INTERLOW and ANR GEOLMI}

\begin{abstract}
Our aim is to prove  a Poncelet type theorem for a line configuration  on the complex projective 
plane $\p^2$.
More precisely, we say that a polygon with $2n$ sides joining $2n$ vertices $A_1, A_2, \cdots , A_{2n}$ 
is well inscribed in a configuration $\mathcal{L}_n$ of $n$
lines if each line of the configuration contains exactly two points among $A_1, A_2, \cdots , A_{2n}$.
Then we prove : 

\smallskip

\noindent \textbf{Theorem}
\textit{Let  $\mathcal{L}_n$ be a configuration of  $n$ lines  and   $D$ a smooth conic in  $\p^2$. 
If it exists one polygon with  $2n$ sides well inscribed in $\mathcal{L}_n$ and circumscribed around   $D$
then there are  infinitely many  such polygons. In particular a general point in  $\mathcal{L}_n$ is a vertex of such a polygon.}

\smallskip

This result was probably known by  Poncelet or Darboux but we did not find a 
similar statement in their publications. Anyway, even if it was, we would like to propose an elementary proof based on Fr\'egier's involution. 
We begin by recalling some facts about these involutions. Then we explore the 
following question : When does the product of involutions correspond to an  involution?
We give a partial answer in proposition \ref{alignes}. 
This question leads also to Pascal theorem, to its dual version proved by Brianchon, and to its generalization 
proved by M\"obius (see \cite{A}, thm.1 and \cite{Mo}, page 219).
In the last section, using the  Fr\'egier's involutions and the projective duality we prove the main theorem quoted above.

\end{abstract}

\maketitle
\today

\section{Introduction}

Let $\mathcal{L}_n$ be a configuration of  $n$ lines $L_1, \cdots, L_n$ in the complex projective plane $\p^2$
and $D$ be a smooth conic 
in the same plane. We assume that $\mathcal{L}_n\cap D$ consists in $2n$ distinct points. From a point $A_1$ on $L_1$ (not being 
on the other lines neither on $D$)
we  draw a tangent line to $D$. This line cuts $L_2$ in a point $A_2$. In the same way we define successively 
$$A_3\in L_3, \cdots, A_n\in L_n, A_{n+1}\in L_{n-1}, \cdots , A_{2n}\in L_2.$$
The second tangent line to $D$ from $A_{2n}$ meets $L_1$ in $A_{2n+1}$. 
Now two situations can occur : $A_{2n+1}=A_1$  or $A_{2n+1}\neq A_1$. This second case is  clearly  the general case.

\smallskip

If $A_{2n+1}=A_1$  the polygon $(A_1A_2)\cup \cdots \cup (A_{2n}A_{2n+1})$ is simultaneously inscribed in $\mathcal{L}_n$ and circumscribed around $D$.
Our aim is to prove that in this case, there are  infinitely many polygons with $2n$ sides  simultaneously inscribed in $\mathcal{L}_n$ and circumscribed around $D$
 (see theorem \ref{produitdedroites}). It means that the existence of such polygons does
 not depend on 
the initial point but only on $D$ and $\mathcal{L}_n$.

\smallskip

On the contrary, if $A_{2n+1}\neq A_1$  the polygon will 
never close after $2n$ steps for any initial point on $\mathcal{L}_n$.

\smallskip

 This kind of result is called a porism and it is of the same nature than Steiner porism 
(see \cite{BB}, thm.7.3) for circles tangent to two given circles or Poncelet porism
for two conics (see \cite{GH}, \cite{BKOR} or \cite{Po}).

\smallskip

To prove this porism we \textit{dualize} the situation and we consider the dual polygon inscribed in $D$. To be more explicit, let us assume that $A_{2n+1}=A_1$. 
Then the situation can be dualized in the following way. Any line $L_i$ in the configuration is the polar line of a point $c_i$ (its pole).
Any tangent line $(A_iA_{i+1})$  is the polar line of a point $x_{i,i+1} \in D$  for $1\le i\le 2n$.
For $1\le j\le n$, the lines $(x_{j-1,j}x_{j,j+1})$ and $(x_{j+n-1,j+n}x_{j+n,j+n+1})$ meet in $c_i$. By this way we obtain  an inscribed polygon 
$(x_{1,2}x_{2,3}) \cup \cdots \cup (x_{2n,2n+1}x_{1,2})$ with $2n$ sides passing through $n$ fixed points $c_1, \cdots, c_n$
(this inscribed polygon corresponds to the choice of a $2n$-cycle among the permutations of $2n$ points).

 \begin{figure}[h!]
    \centering
    \includegraphics[height=8.5cm]{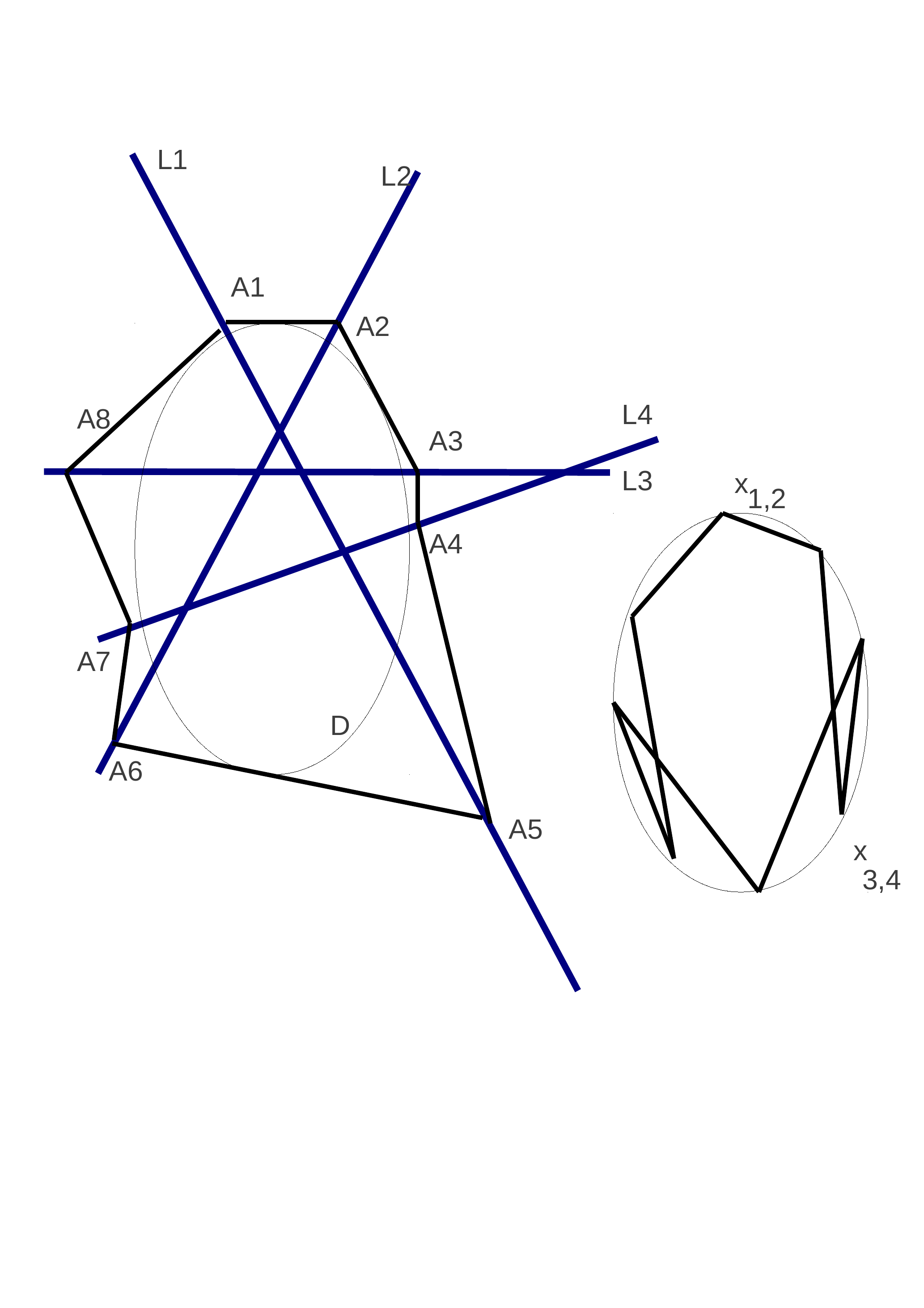}
    \caption{Octagon inscribed in $D$ and its dual inscribed in $4$ lines and circumscribed around $D$. }
  \end{figure}
This inscribed polygon  leads to study Fr\'egier's involutions that are particular  automorphisms of $D$. 
Indeed we verify that the porism (seetheorem \ref{produitdedroites}) is true when  the product of the $n$ Fr\'egier's involutions
giving the incribed polygon is also a Fr\'egier's involution. 

\smallskip

So first of all we  recall the definition 
and the  main properties of these automorphisms. 
Then we study the product of involutions. More precisely we wonder when a product of $n$ involutions is still an involution.\\
For $n=2$, it is very well known (see proposition \ref{two}).\\
For $n=3$, we prove that the product is an involution if and only if the centers are aligned (see proposition \ref{uvw2}). It is an other way to set out the 
so-called Pascal theorem\footnote{Another way but probably not a new way. According to its  simplicity this argument is certainly already written somewhere.}.\\
For $n\ge 4$, we propose a new proof of a generalization of Pascal theorem due to M\"obius (see theorem \ref{mob}). We prove also that the product of $2n+1$ involutions 
is an involution when their centers are aligned (see proposition \ref{alignes}).

\smallskip

The projective duality give us the dual versions of all these results, like Brianchon theorem for instance (see theorem \ref{dmob} and proposition \ref{dalignes}).

\smallskip

In the last section we prove our Poncelet type theorem for lines (see theorem \ref{produitdedroites}) and we conclude by an explicit computation in the case of two lines 
simultaneously inscribed and circumscribed around a smooth conic.

\section{Fr\'egier's involutions}
The group $ \mathrm{PGL}(2,\C)$ acts on $\p^1=\p^1(\C)$ in the usual way. 
\begin{defi}
 An element  $g\in \mathrm{PGL}(2,\C)$ which is not the identity $I$ on $\p^1$ is called an involution
 if    $g^2= I$.
\end{defi}

Considering  $ \mathrm{GL}(2,\C)$ as the space of $2\times 2$ invertible matrices it is clear that any $g\in   \mathrm{PGL}(2,\C)$ 
 has at most two fixed points ($g$ has three fixed points if and only if
$g= I$). 
Moreover, when $g$ is an involution it is easy to verify that 
it has exactly  two fixed points and that it is determined  by the data of these fixed points. 

\smallskip

The Veronese embedding  $\p^1 \stackrel{v_2}\hookrightarrow \p^2$ induces an embedding for groups  $\mathrm{PGL}(2,\C) \subset \mathrm{PGL}(3,\C)$. 
 Let  $g\in \mathrm{PGL}(2,\C)$ be an involution on $\p^1$. 
The  corresponding transformation $g$ in $\p^2$ has  two fixed points on
the smooth conic $D=v_2(\p^1)$: the images of the fixed points on $\p^1$. \\
Let us consider now the intersection point $x\notin D$ of the two  tangent lines in the fixed points of $g$.
 A general line through $x$ cuts $D$ in two points. The map exchanging 
these points is an involution on $D$ (i.e. on $\p^1$). Its fixed points are 
the  intersection points of $D$ with the polar line of $x$. Such an involution  is called \textit{Fr\'egier involution} on $D$ with center $x$. 

Since an involution is determined by its fixed points, this involution is  $g$.
We have verified that:
\begin{pro}
 Any involution on $D\subset \p^2$ is a Fr\'egier's involution.
\end{pro}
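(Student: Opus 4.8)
The plan is to reduce everything to the principle already recalled: \emph{an involution of $D$ is completely determined by its two fixed points}. So given an arbitrary involution $g$ on $D$, it suffices to exhibit one center $x\notin D$ whose Fr\'egier involution has the same fixed points as $g$; that Fr\'egier involution will then be forced to equal $g$. Identify $D$ with $\p^1$ through $v_2$, so that $g$ is an involution on $\p^1$ and, as noted above, has exactly two distinct fixed points, which we view as two points $p,q\in D$.

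First I would construct the candidate center. Let $T_p$ and $T_q$ be the tangent lines to $D$ at $p$ and $q$. Since $p\neq q$ and $D$ is smooth these two tangents are distinct, hence meet in a single point $x$. As each tangent line meets $D$ only at its point of contact, the point $x$ cannot lie on $D$ (otherwise $x$ would equal both $p$ and $q$). Thus $x$ is an admissible center and determines a Fr\'egier involution on $D$, which I will denote $\phi_x$.

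Next I would identify the fixed points of $\phi_x$. By the definition recalled above, the fixed points of a Fr\'egier involution with center $x$ are the two intersection points of $D$ with the polar line of $x$. The key classical fact from the polarity theory of conics is that the polar line of the meeting point of two tangents is precisely their chord of contact; here this says that the polar of $x$ is the line $\overline{pq}$. Consequently the polar of $x$ meets $D$ exactly at $p$ and $q$, so $\phi_x$ has fixed points $p$ and $q$, the same as $g$. Since an involution is determined by its pair of fixed points, $g=\phi_x$, and $g$ is a Fr\'egier involution.

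The one step requiring genuine input is the polarity computation in the third paragraph, namely that the polar of the intersection of the tangents at $p$ and $q$ is the chord $\overline{pq}$; everything else is formal. As an alternative one could argue by parameters: Fr\'egier involutions are parametrized by their centers $x\in\p^2\setminus D$, a two-dimensional family, matching the two-dimensional family of unordered pairs of distinct fixed points in $\p^1$, so an injectivity-plus-dimension argument would also give surjectivity. I prefer the direct identification above, since it produces the center explicitly and avoids having to verify surjectivity separately.
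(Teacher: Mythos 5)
Your argument is correct and is essentially the paper's own: the paper also takes the center to be the intersection of the tangents at the two fixed points of $g$ and concludes by uniqueness of an involution with given fixed points. You merely make explicit the polarity fact (the polar of the meeting point of two tangents is the chord of contact) that the paper leaves implicit.
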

\subsection{Product of two and  three involutions}
Rigorously speaking we should not write product but composition of involutions. Anyway, from now, since for matrices it becomes a product we will  
write product in any case. Moreover we will denote $uv$ the product (i.e the composition) of two involutions $u$ and $v$  and $u^n$ the product (i.e the composition)
 of $u$ with itself $n$  times.
 The following proposition is  
classical and easy to prove.
\begin{pro}
\label{two}
Let $u$ and $v$ be two involutions with distincts fixed points. Then,
$ uv$ is involutive if and only if  the two fixed points of $u$
and the two fixed points of $v$  form an harmonic divison of $D$.
\end{pro}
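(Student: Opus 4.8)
The plan is to move everything onto $D\cong\p^1$ and represent each involution by a $2\times 2$ matrix, reducing the statement to a single trace computation. First I would record the matrix characterisation of involutions: a non-scalar $M\in\GL(2,\C)$ represents an element of order two in $\mathrm{PGL}(2,\C)$ if and only if $\operatorname{tr} M=0$. Indeed, if $M^2=\lambda I$ with $M$ non-scalar then $M$ has eigenvalues $\pm\mu$ and hence vanishing trace; conversely, as soon as $\operatorname{tr} M=0$, Cayley--Hamilton gives $M^2=-\det(M)\,I$, a scalar, so $M$ descends to an involution. The two fixed points of that involution are precisely the eigendirections of $M$ in $\p^1$.

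Next I would observe that, for trace-zero representatives $u$ and $v$, the product $uv$ is again an involution if and only if $\operatorname{tr}(uv)=0$. By the criterion above this only requires knowing that $uv$ is not the identity in $\mathrm{PGL}(2,\C)$; but $v^2=I$, so $uv=I$ would force $u=v$, contradicting that $u$ and $v$ have distinct fixed points. Thus $uv$ is automatically non-scalar and the trace criterion applies.

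The computation I would then carry out in a convenient normalisation. Since both the involution condition and the harmonic condition are $\mathrm{PGL}(2,\C)$-invariant, I may place the fixed points of $u$ at $0$ and $\infty$, so that
\[
u=\begin{pmatrix}1&0\\0&-1\end{pmatrix}.
\]
Writing the fixed points of $v$ as $\alpha,\beta$, the unique trace-zero involution fixing them is
\[
v=\begin{pmatrix}\alpha+\beta & -2\alpha\beta\\ 2 & -(\alpha+\beta)\end{pmatrix},
\]
and a direct multiplication yields $\operatorname{tr}(uv)=2(\alpha+\beta)$. Hence $uv$ is an involution precisely when $\beta=-\alpha$.

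Finally I would translate $\beta=-\alpha$ into the harmonic condition. The cross-ratio of the four fixed points is $(0,\infty;\alpha,\beta)=\alpha/\beta$, which equals $-1$ exactly when $\beta=-\alpha$; this is exactly the statement that the pairs $\{0,\infty\}$ and $\{\alpha,\beta\}$ form a harmonic division of $\p^1$, hence of $D$. Stringing the three implications together gives the desired equivalence. I do not anticipate a genuine obstacle here: the only points needing care are the bookkeeping justification that the normalisation costs no generality (this is the projective invariance of both sides) and the small exclusion $uv\neq I$ noted above, both of which are immediate.
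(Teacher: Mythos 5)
Your proof is correct and complete. The paper itself offers no argument for this proposition (it is dismissed as ``classical and easy to prove''), so there is nothing to match it against; but your route is entirely consistent with the paper's own toolkit, since the final subsection likewise represents Fr\'egier involutions by explicit $2\times 2$ matrices $M_u$, $M_v$ and tests a closure condition on the product. All the steps check out: the trace-zero characterisation of involutions in $\mathrm{PGL}(2,\C)$ via Cayley--Hamilton, the exclusion $uv\neq I$ (which, as you say, would force $u=v$), the normalisation of the fixed points of $u$ to $\{0,\infty\}$, the identity $\operatorname{tr}(uv)=2(\alpha+\beta)$, and the translation of $\alpha+\beta=0$ into the cross-ratio condition $(0,\infty;\alpha,\beta)=\alpha/\beta=-1$. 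An alternative in the spirit of the paper's other proofs (propositions \ref{uvw2} and \ref{alignes}) would be a fixed-point count: one shows $uv$ and $vu$ agree on three points of $D$, forcing $(uv)^2=I$; your trace computation is shorter and, as a bonus, makes the ``only if'' direction transparent, which is the direction the synthetic arguments in the paper tend to handle less directly. The only bookkeeping worth a half-sentence is that the four fixed points are pairwise distinct (so that the cross-ratio is defined); this is implicit in the hypothesis and in your normalisation, since $\alpha+\beta=0$ with $\alpha=0$ would contradict $\alpha\neq\beta$.
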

For three  involutions
we recognize Pascal theorem.
\begin{pro}
\label{uvw2} Let $u$, $v$, and $w$ be three involutions with distincts fixed points and let $x_u$,
$x_v$ and
$x_w$ be their respective centers. Then,
$$ uvw\,\,\textrm{is involutive}\,\,\Leftrightarrow  x_u, x_v\,\,\textrm{and}\,\, x_w
\,\,\textrm{are aligned}.$$
\end{pro}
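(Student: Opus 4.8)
Proposition \ref{uvw2} asserts that the product $uvw$ of three Frégier involutions on $D$ is involutive if and only if their centers $x_u, x_v, x_w$ are collinear. Let me sketch a proof via Pascal's theorem.

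The plan is to translate the statement about involutions into the configuration of fixed points on the conic $D$, and then recognize Pascal's hexagon. First I would use the fact, established in the previous section, that each Frégier involution is determined by its pair of fixed points on $D$, and that its center is the pole of the chord joining those fixed points (equivalently, the intersection of the two tangent lines at the fixed points). So write $u$ as the involution fixing $\{P_1, P_2\}$, $v$ fixing $\{Q_1, Q_2\}$, and $w$ fixing $\{R_1, R_2\}$, where all six points lie on $D$. The center $x_u$ is then the pole of the line $(P_1P_2)$, and similarly for $x_v, x_w$.

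Next I would analyze when $uvw$ is an involution. Since each of $u,v,w$ is an involution in $\mathrm{PGL}(2,\mathbb{C})$, the product $uvw$ is involutive precisely when $(uvw)^2 = I$, equivalently when $uvw = (uvw)^{-1} = w^{-1}v^{-1}u^{-1} = wvu$. So the condition is the commutation-type identity $uvw = wvu$. The cleanest route is to pass to matrix representatives in $\mathrm{GL}(2,\mathbb{C})$: an involution corresponds to a trace-zero matrix (up to scalar), and the condition that a product of three such matrices again have trace zero can be unwound into a symmetric algebraic relation among the six fixed points. Rather than grind through this, I would instead argue geometrically: a nonidentity element of $\mathrm{PGL}(2,\mathbb{C})$ is an involution if and only if it has two fixed points that are exchanged harmonically, or more simply if and only if its trace vanishes, and I would track the fixed-point data of $uvw$ directly.

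The geometric heart, which I expect to be the main step, is the identification with Pascal. Consider the hexagon inscribed in $D$ with vertices $P_1, Q_1, R_1, P_2, Q_2, R_2$ in a suitable cyclic order. The three pairs of opposite sides of this hexagon are chords of $D$, and Pascal's theorem says their three intersection points are collinear exactly when the six points lie on a conic — which here they automatically do. The subtlety is to match the collinearity of the \emph{centers} $x_u, x_v, x_w$ (which are poles, hence living on the polar/dual side) with a Pascal line. The correct bridge is projective duality: the centers $x_u, x_v, x_w$ are the poles of the chords $(P_1P_2), (Q_1Q_2), (R_1R_2)$, so collinearity of the three centers is equivalent, under the polarity with respect to $D$, to the statement that the three chords are \emph{concurrent} (their common polar-dual point), or dually that a certain Pascal-type alignment holds for the dual hexagon. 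I would therefore set up the hexagon on the dual conic whose sides are the tangent lines, read off the condition that $uvw$ composes to an involution as the vanishing of the composite's trace, and show by a direct but short computation that this vanishing is equivalent to the Pascal line condition, hence to the collinearity of $x_u, x_v, x_w$.

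The main obstacle will be getting the cyclic ordering of the six fixed points on the hexagon right so that the three intersection points of opposite sides correspond precisely to the three centers (and not to some other alignment): Pascal's conclusion depends on which three pairs one designates as ``opposite,'' and one must verify that the pairing dictated by the composition $uvw = wvu$ is exactly the pairing $(P_1P_2 \mid Q_1Q_2 \mid R_1R_2)$ that yields the centers. Once this bookkeeping is pinned down, both implications follow at once from Pascal (and its converse), since Pascal's theorem is an equivalence for six points on a conic.
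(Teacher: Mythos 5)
Your proposal has a genuine gap, and it sits exactly at the step you call ``the geometric heart.'' You identify the centers $x_u,x_v,x_w$ with the three intersection points of opposite sides of the hexagon whose vertices are the \emph{fixed points} $P_1,Q_1,R_1,P_2,Q_2,R_2$. That identification is false: the center of a Fr\'egier involution is the pole of the chord joining its two fixed points (the meet of the two tangent lines there), not the meet of any pair of hexagon sides. Worse, even if some hexagon of the six fixed points were the right object, Pascal's theorem is \emph{unconditional} for six points already lying on $D$: the three meets of opposite sides are always collinear. So no choice of cyclic ordering of the fixed points can produce the ``if and only if'' you need --- if your identification held, the proposition would assert that $uvw$ is always an involution, which is false. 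The polarity remark you make (centers collinear iff the three chords of fixed points are concurrent) is correct but is only a dual restatement of the conclusion; the actual content --- that $uvw=wvu$ is equivalent to that concurrence --- is deferred to a ``direct but short computation'' with trace-zero matrices that is never carried out. The logical direction is also reversed relative to the paper: Pascal's theorem is \emph{deduced from} this proposition by applying it to the hexagon $p_1p_2p_3q_3q_2q_1$ of points that are \emph{exchanged} (not fixed) by the three involutions, with $u\colon p_1\leftrightarrow q_3$, etc.; your proof would make the corollary prove the proposition while aiming at the wrong hexagon.

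For comparison, the paper's argument is an elementary fixed-point count in $\mathrm{PGL}(2,\C)$. If the centers lie on a line $L$ with $L\cap D=\{x,y\}$, each of $u,v,w$ swaps $x$ and $y$, hence so does $uvw$; together with the fixed point that $uvw$ necessarily has, this gives $(uvw)^2$ three fixed points, so $(uvw)^2=I$. For the converse one picks $x$ with $v(x)=w(x)\neq x$, uses $uvw=wvu$ to produce too many fixed points for $wv$, and derives a contradiction with the fact that an involution is determined by its two fixed points. If you want to salvage a trace computation, that is feasible (a product of three trace-zero matrices has trace zero iff a certain symmetric relation holds), but you would then need to show directly that this relation expresses collinearity of the three poles; Pascal's theorem cannot be invoked to shortcut that step.
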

\begin{proof}\footnote{This  proposition, its proof and its corollary appear in a book project with Giorgio Ottaviani (see \cite{wykno}).} 
Assume that the three centers are aligned on a line $L$.
The line $L$ is not tangent to $D$ because the three involutions do not have a common 
fixed point. Then  let $\{x,y\}= L\cap D$. We verify that $uvw(x)=y$ and $uvw(x)=y$. The automorphism 
$uvw$ has at least one fixed point
$z\in D$. Now the three points $x,y$ and $z$ are fixed points for
$(uvw)^2$. 
 \begin{figure}[h!]
    \centering
    \includegraphics[height=6.5cm]{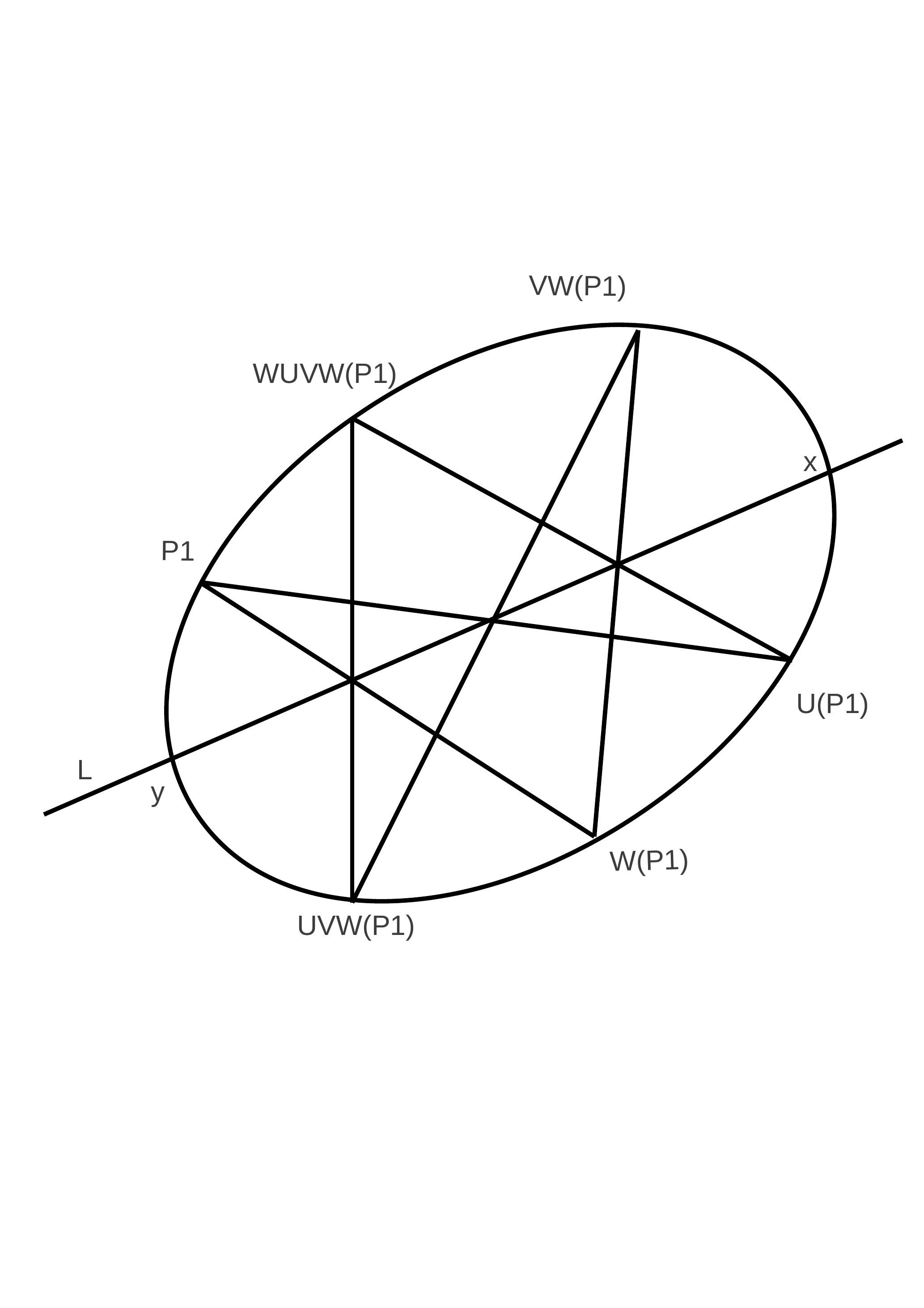}
    \caption{Three involutions with aligned centers. }
  \end{figure}
It means that $ uvw$ is an involution on $D$.

\smallskip

Conversely, assume that $uvw$ is involutive. Let $x\in D$ such that  $v(x)=w(x)\neq x$. Call $L$ the line joining $x$ to $v(x)$, i.e passing trough $x_{v}$ and $x_{w}$. From
$v(x)=w(x)\neq x$ and the assumption $uvw=wvu$, we find four fixed points of $wv$:
$x, v(x), u(x), uv(x)$. 
The automorphism $wv$ has at most two distinct fixed points. The first two are
distinct. Consider the third one $u(x)$.
If $u(x)=v(x)$ we have $u(x)=v(x)=w(x)$ and we have finished, hence $u(x)=x$.
Consider the fourth one $uv(x)$. If $uv(x)=x $ we have again $u(x)=v(x)=w(x)$ and we
have finished, hence $uv(x)=v(x)$.
It follows that $x, v(x)$ are fixed points of both $u$ and $uvw$, that is a
contradiction since an involution is uniquely determined by its fixed points.
\end{proof}
\begin{cor}[Pascal's theorem]
Let $p_1, p_2, p_3, q_3, q_2, q_1$ be six (ordered) points on a smooth conic
$D$. Let $x_{ij}, i<j$ the 
 intersection point of the two lines joining $p_i$ to $q_j$ and $p_j$ to $q_i$.
Then the three points
$x_{12}, x_{13}$ and $x_{23}$ are aligned.
\end{cor}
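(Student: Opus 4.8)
The plan is to realize the three Pascal points $x_{12},x_{13},x_{23}$ as the centers of three Fr\'egier involutions on $D$ and then to invoke Proposition \ref{uvw2}. First I would record the action of each involution. Since the center $x_{ij}$ lies on the chord $(p_iq_j)$ and also on the chord $(p_jq_i)$ of $D$, the Fr\'egier involution $t_{ij}$ with center $x_{ij}$ interchanges the two intersection points of each of these chords with $D$; that is,
\[
t_{ij}(p_i)=q_j,\qquad t_{ij}(q_j)=p_i,\qquad t_{ij}(p_j)=q_i,\qquad t_{ij}(q_i)=p_j.
\]
Here I assume, as I may, that the six points are distinct, so that the three centers $x_{12},x_{13},x_{23}$ are well defined, lie off $D$, and the three involutions have the distinct fixed points required by Proposition \ref{uvw2}.

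Next I would compute the composite $\varphi=t_{13}\,t_{23}\,t_{12}$ on the two points $p_1$ and $q_1$, using only the four exchanges listed above for each factor. Tracing $p_1$ through $t_{12}$, then $t_{23}$, then $t_{13}$ gives $p_1\mapsto q_2\mapsto p_3\mapsto q_1$, and likewise $q_1\mapsto p_2\mapsto q_3\mapsto p_1$. Hence $\varphi$ interchanges the two distinct points $p_1$ and $q_1$ of $D$.

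The key step is then to conclude that $\varphi$ is itself an involution, by exactly the device used in the proof of Proposition \ref{uvw2}. The automorphism $\varphi$ of $D\cong\p^1$ has at least one fixed point $z$, and $z$ is distinct from $p_1$ and $q_1$ since these are interchanged rather than fixed; therefore $p_1,q_1,z$ are three distinct fixed points of $\varphi^2$, which forces $\varphi^2=I$, while $\varphi\neq I$. Thus $\varphi=t_{13}\,t_{23}\,t_{12}$ is an involution. Applying Proposition \ref{uvw2} to the three involutions $t_{13},t_{23},t_{12}$, whose respective centers are $x_{13},x_{23},x_{12}$, yields that these three centers are aligned, which is precisely Pascal's assertion.

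The only real obstacle is bookkeeping rather than substance: one must verify that the genericity hypotheses of Proposition \ref{uvw2} are met (the six points distinct, hence the three centers distinct and off $D$, with no fixed point common to the three involutions) and keep the composition order straight when tracing $p_1$ and $q_1$. Neither point is serious, so the corollary follows at once from Proposition \ref{uvw2}.
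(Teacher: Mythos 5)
Your proposal is correct and follows essentially the same route as the paper: you form the same composite $t_{13}t_{23}t_{12}$ (the paper's $uvw$), check that it swaps $p_1$ and $q_1$ by chasing the chords, deduce it is an involution from the three fixed points of its square, and conclude by Proposition \ref{uvw2}. The only difference is that you spell out the genericity and distinctness checks that the paper leaves implicit.
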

\begin{proof}
We denote by $u$ the involution defined by $x_{13}$, $v$ the
one defined by $x_{23}$ and
$w$ the last one defined by $x_{12}$. Then by following lines we verify that
$$(uvw)(p_1)=q_1, (uvw)(q_1)=p_1.$$
Let $z$ be a fixed point of $uvw$. Then $z,p_1,q_1$ are fixed points of $ (uvw)^2$.
Since an element of $\textrm{PGL}(2,\C)$ that has more than three fixed
points is the identity, we have
proved that $ uvw$ is an involution. The result now follows from proposition \ref{uvw2}.
\end{proof}

\begin{rem}
\label{coropascal}
The center of the product of three involutions $u_1, u_2$ and $u_3$ with aligned centers (on a line $L$) 
belongs also to $L$. Indeed let us define $v=u_1 u_2 u_3$; since the centers are aligned $v$ is also an ivolution.  Then 
$u_1 u_2 u_3v=I$. It  implies $u_1=u_2u_3 v$ i.e that the product of $u_2,u_3$ and $v$ is an involution. According to  theorem 
\ref{uvw2}  their 
centers are aligned.
\end{rem}

\subsection{Product of $n\ge 4$ involutions and M\"obius theorem}
We show that the product of an odd number of involutions with aligned centers is still an involution.
\begin{rem}
We cannot expect an equivalence as it was in proposition \ref{uvw2} but only an implication.
Indeed let us give a product of five  involutions with three centers on a line $L$ and two centers on another line $D$ that is also an involution.
So, let us consider a line $L$ and three points $x_1,x_2,x_3$ on it. We associate three  involutions $u_1,u_2, u_3$ to these centers.
The product $w=u_1 u_2 u_3$ is also an involution by proposition \ref{uvw2} . Moreover, according to  remark \ref{coropascal} its center  $x$ belongs to $L$.
 Now we introduce another line $D$ passing through $x$,
and two others points $x_4,x_5$ on $D$. Let us call $u_4, u_5$ the associated involutions. Then 
$$ u_1 u_2 u_3 u_4  u_5=w u_3 u_4,$$
and $w u_3 u_4$ is an involution because their  centers $x,x_4,x_5$ are aligned.
\end{rem}
\begin{pro}
\label{alignes}
 Let  $u_1, \cdots, u_{2n+1}$ be  involutions on   $D\subset \p^2$ with respective centers 
  $c_1,\cdots, c_{2n+1}$. If $c_1,\cdots, c_{2n+1}$ are  aligned then 
  $\prod_{i=1}^{2n+1}u_{i}$ is an involution. 
\end{pro}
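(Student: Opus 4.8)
The plan is to argue by induction on $n$, bootstrapping from the three–involution case of Proposition \ref{uvw2}. The decisive extra ingredient is Remark \ref{coropascal}: when three involutions have centers on a line $L$, not only is their product an involution, but its center again lies on $L$. This is precisely what allows the alignment hypothesis to propagate through the induction. Accordingly, I would strengthen the statement to be proved and establish the following by induction on $n\ge 1$: \emph{if $u_1,\dots,u_{2n+1}$ are involutions on $D$ whose centers $c_1,\dots,c_{2n+1}$ all lie on a line $L$, then $\prod_{i=1}^{2n+1}u_i$ is an involution whose center also lies on $L$.} The base case $n=1$ is immediate: Proposition \ref{uvw2} gives that $u_1u_2u_3$ is an involution, and Remark \ref{coropascal} gives that its center lies on $L$.

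For the inductive step, assuming the claim for $n-1$, I would group the first three factors and set $w:=u_1u_2u_3$. By the base case $w$ is an involution whose center lies on $L$. Then
\[
\prod_{i=1}^{2n+1}u_i \;=\; w\,u_4u_5\cdots u_{2n+1},
\]
which is a product of $1+(2n-2)=2(n-1)+1$ involutions, all of whose centers lie on $L$. Applying the induction hypothesis to these $2(n-1)+1$ involutions shows the product is an involution with center on $L$, completing the induction.

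The point requiring care — and the only real obstacle — is degeneracy. To invoke Proposition \ref{uvw2} at each stage one needs the three grouped involutions to have no common fixed point, i.e. $L$ must not be tangent to $D$; otherwise the tangency point $L\cap D$ is a common fixed point of every involution centered on $L$, and this easy special case should be dispatched separately. Assuming $L$ is not tangent to $D$, the two points of $L\cap D$ are distinct, so the product $w$ genuinely moves them and is a bona fide (non-identity) involution, as in the proof of Proposition \ref{uvw2}. Crucially, since \emph{every} intermediate center stays on the one fixed line $L$, this non-tangency condition persists unchanged at every step of the recursion, so no new degeneracy can arise as the induction proceeds.

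As a sanity check (and an alternative route), one may verify the statement by pure linear algebra: identifying an involution with its trace-zero representative in $\mathfrak{sl}_2$, the center map is projective-linear, so ``centers aligned'' means the matrices $M_1,\dots,M_{2n+1}$ lie in a $2$-dimensional subspace $W\subset\mathfrak{sl}_2$. Since $XY+YX=\mathrm{tr}(XY)\,I$ for trace-zero $X,Y$, the subalgebra generated by $W$ is the Clifford algebra of the quadratic form $M\mapsto\tfrac12\mathrm{tr}(M^2)$ on $W$, whose odd part is exactly $W$. Hence an odd product $M_1\cdots M_{2n+1}$ of elements of $W$ is again odd, i.e. lies in $W$, so it is trace-zero; being invertible (a product of invertible matrices) and nonscalar, it is an involution. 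This confirms the result and makes the role of the alignment transparent.
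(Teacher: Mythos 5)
Your argument is correct, but it takes a genuinely different route from the paper's. The paper proves Proposition \ref{alignes} directly, with no induction and no appeal to Proposition \ref{uvw2}: writing $v=\prod_{i=1}^{2n+1}u_i$ and $L\cap D=\{y,z\}$, each $u_i$ (having its center on $L$) exchanges $y$ and $z$, so the odd product $v$ exchanges them as well; since $v$ also has some fixed point $x$, necessarily distinct from $y$ and $z$, the element $v^2$ fixes the three points $x,y,z$ and is therefore the identity. Your induction instead bootstraps from the three-center case, and its key extra ingredient --- Remark \ref{coropascal}, which keeps the center of each partial product on $L$ --- is exactly what makes the recursion close up; as a by-product you obtain the stronger statement that the center of the full product lies on $L$ (this also drops out of the paper's argument, since an involution exchanging $y$ and $z$ has its center on the line $(yz)=L$, but the paper does not record it at this level of generality). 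Your route costs more, leaning on Pascal's theorem where the paper's proof is self-contained and two lines long, but the Clifford-algebra variant you sketch is a genuine bonus: the identity $XY+YX=\mathrm{tr}(XY)\,I$ on $\mathfrak{sl}_2$ does show that an odd product of trace-zero matrices lying in a plane is again trace-zero, hence (being invertible and non-scalar) an involution, and unlike both synthetic arguments it covers the degenerate case where $L$ is tangent to $D$ --- a case the paper silently excludes by writing $L\cap D=\{y,z\}$ with $y\neq z$, and which you rightly flag but leave undispatched in your synthetic version.
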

\begin{proof}
Let $v=\prod_{i=1}^{2n+1}u_{i} $. We recall that the fixed points of $v$ are also fixed points of $v^2$.
The automorphism  $v$ possesses at least one fixed point $x$.  
Let $L$ be the line of centers and  $L\cap D =\lbrace y,z \rbrace$. These two points are exchanged by 
$v$ and then are not fixed points of $v$. The points $x,y,z$ are fixed points of $v^2$, then $v^2=I$. 
\end{proof}
\begin{rem}
 The proposition is not valid for an even number of involutions. Indeed since two points are always aligned it is clearly not valid for two involutions.
Consider now three involutions with aligned centers $u_1,u_2,u_3$. Call $w$ their product. Let  $u_4$ be another involution with its center on the previous 
line of centers. Then the product $w u_4$ is an involution if and only if  the four fixed points form an  harmonic division. But if we move the center of $u_4$ on $L$
 the cross-ratio is changing. So for the general point on $L$ the product will not be involutive.
\end{rem}
In general when the product of involutions is an involution we are not able to say something 
pertinent about the position of their centers. But, when the product of $n$ involutions  is still an involution and at least $n-1$ centers are aligned, 
M\"obius proved that 
all the centers are aligned (see \cite{Mo}, page 219).  
We prove again this theorem in the terminology of Fr\'egier's involution. The formulation below  is the one given in  Adler's article (see \cite{A}, thm. 1).
\begin{thm}[M\"obius theorem]
\label{mob}
Let  $x_1,y_1, \cdots, x_n, y_n$ be points on a smooth conic. Consider the intersection points 
$a_{j}=(x_jx_{j+1})\cap (y_jy_{j+1})$,  $j=1,\cdots,n-1$ and 
$$ a_n= \left \{
             \begin{array}{ccc}
              (x_ny_{1})\cap (y_nx_{1}) &  \mathrm{if}   &  n=2m+1          \\
             (x_nx_{1})\cap (y_ny_{1}) &  \mathrm{if}   &  n=2m.
\end{array}
      \right.$$
If all of these points except possibly one are collinear then the same is true for the remaining point.
\end{thm}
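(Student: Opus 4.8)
The plan is to translate the collinearity assertion into one about Fr\'egier involutions and then collapse it, via the Pascal-type results already proved, to a configuration of three involutions. For each $j$ let $u_j$ be the involution of $D$ with center $a_j$. Since $a_j$ lies on $(x_jx_{j+1})$ and on $(y_jy_{j+1})$ for $j<n$, the involution $u_j$ exchanges $x_j\leftrightarrow x_{j+1}$ and $y_j\leftrightarrow y_{j+1}$; and $u_n$ exchanges $x_n\leftrightarrow y_1,\ y_n\leftrightarrow x_1$ when $n=2m+1$, respectively $x_n\leftrightarrow x_1,\ y_n\leftrightarrow y_1$ when $n=2m$. The labelled configuration carries an order-$n$ cyclic symmetry---the shift $x_i\mapsto x_{i+1},\ y_i\mapsto y_{i+1}$ when $n$ is even, and the shift of the single $2n$-cycle $x_1,\dots,x_n,y_1,\dots,y_n$ when $n$ is odd---which permutes $a_1,\dots,a_n$ cyclically; hence I may assume the point allowed to be exceptional is $a_n$, so that $a_1,\dots,a_{n-1}$ lie on a common line $L$, and I must prove $a_n\in L$. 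Write $\{y,z\}=L\cap D$ and set $w=u_{n-1}\cdots u_1$. A telescoping computation gives $w(x_1)=x_n$ and $w(y_1)=y_n$, while each $u_i$ with $i<n$, having its center on $L$, exchanges $y\leftrightarrow z$.

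For $n$ odd I put $v=u_nw=u_nu_{n-1}\cdots u_1$. From the exchanges above, $v(x_1)=y_1$ and $v(y_1)=x_1$, so $v$ swaps the two distinct points $x_1,y_1$; as every element of $\mathrm{PGL}(2,\C)$ has a fixed point, $v^2$ then fixes three points and $v$ is an involution---and this holds with no hypothesis on $a_n$. Now I bring in collinearity: $w$ is a product of the even number $n-1$ of involutions with centers on $L$, so collapsing three consecutive factors into one by remark \ref{coropascal} (each step keeping the new center on $L$) rewrites $w$ as a product $\tilde u_2\tilde u_1$ of two involutions with centers on $L$. Then $v=u_n\tilde u_2\tilde u_1$ is a product of three involutions already known to be involutive, so Pascal's theorem in the form of proposition \ref{uvw2} forces the centers of $\tilde u_1,\tilde u_2,u_n$ to be aligned; since the first two lie on $L$ this line is $L$, whence $a_n\in L$.

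For $n$ even the number $n-1$ is odd, so proposition \ref{alignes} shows directly that $w=u_{n-1}\cdots u_1$ is an involution, and the odd number of exchanges $y\leftrightarrow z$ shows that $w$ swaps $y\leftrightarrow z$, i.e.\ its center lies on $L$. Being an involution with $w(x_1)=x_n$, the map $w$ also swaps $x_1\leftrightarrow x_n$; but in this parity $u_n$ swaps $x_1\leftrightarrow x_n$ as well. Consequently $v=u_nw$ satisfies $v(x_1)=u_n(x_n)=x_1$, $v(x_n)=u_n(x_1)=x_n$, and $v(y_1)=u_n(y_n)=y_1$, so $v$ fixes the three points $x_1,x_n,y_1$ and therefore $v=\mathrm{id}$. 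This gives $u_n=w^{-1}=w$, so $u_n$ and $w$ have the same center and again $a_n\in L$.

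The step I expect to be the main obstacle is the parity split itself, together with the degenerate configurations concealed behind ``three distinct fixed points'': one must check that $x_1,x_n,y_1$ are genuinely distinct, that the collapsed involutions $\tilde u_1,\tilde u_2$ are nondegenerate with distinct centers so that $L$ is recovered, and that $L$ is not tangent to $D$; the loci where $w=\mathrm{id}$ or $w$ is itself an involution (so that the line through two collapsed centers degenerates) should be handled as limits of the generic case. The conceptual heart is the clean dichotomy that the incidence data alone make $v$ an involution when $n$ is odd, whereas it is precisely the collinearity of $a_1,\dots,a_{n-1}$ that collapses $v$ to the identity when $n$ is even.
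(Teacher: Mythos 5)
Your argument is correct (modulo the genericity caveats you yourself flag, which the paper also leaves implicit), and while it runs on the same machinery as the paper --- Fr\'egier involutions, the collapsing device of remark \ref{coropascal}, proposition \ref{uvw2} and proposition \ref{alignes} --- it diverges from the paper's proof in a genuinely useful way on the even case. The paper reduces everything to a four-involution lemma (``if $u_1u_2u_3u_4=v$ is an involution and three of the centers lie on $L$, then so does the fourth'') proved by a commutation computation; this requires knowing \emph{a priori} that the product of the four involutions is an involution, which for even $n$ does not follow from the incidence data alone (the product merely fixes $x_1$ and $y_1$), and the printed computation moreover ends in the tautology $u_1v=u_1v$ rather than the intended $u_1v=vu_1$. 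Your route avoids both issues: you observe that $w=u_{n-1}\cdots u_1$ is an involution with center on $L$ by proposition \ref{alignes} (the center lying on $L$ because $w$ exchanges the two points of $L\cap D$), so that $u_nw$ fixes the three points $x_1,x_n,y_1$ and is therefore the identity, giving $u_n=w$ and hence $a_n\in L$ with no commutator gymnastics. For odd $n$ your argument and the paper's are essentially the same --- the full product swaps $x_1\leftrightarrow y_1$ and is thus automatically an involution, and one collapses down to three factors and invokes Pascal --- differing only in bookkeeping (you collapse $u_1,\dots,u_{n-1}$ to two involutions, the paper collapses $u_1,\dots,u_{n-2}$ to one). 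Your explicit justification of the reduction to the case where $a_n$ is the exceptional point, via the cyclic symmetry of the labelled configuration, is a point the paper passes over in silence and is worth keeping. The remaining gaps you list (distinctness of $x_1,x_n,y_1$, nondegeneracy of $\tilde u_1,\tilde u_2$, $L$ not tangent to $D$) are real but are exactly the genericity hypotheses under which the statement is meaningful, so flagging them suffices.
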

\begin{proof}
As we have seen before (in proposition \ref{uvw2})   it is true for three involutions
since  the product of three involutions is an involution if and only if the centers are aligned. Moreover as we said in remark \ref{coropascal}
the center of the product 
is also aligned with the  three others.

 \begin{figure}[h!]
    \centering
    \includegraphics[height=8.5cm]{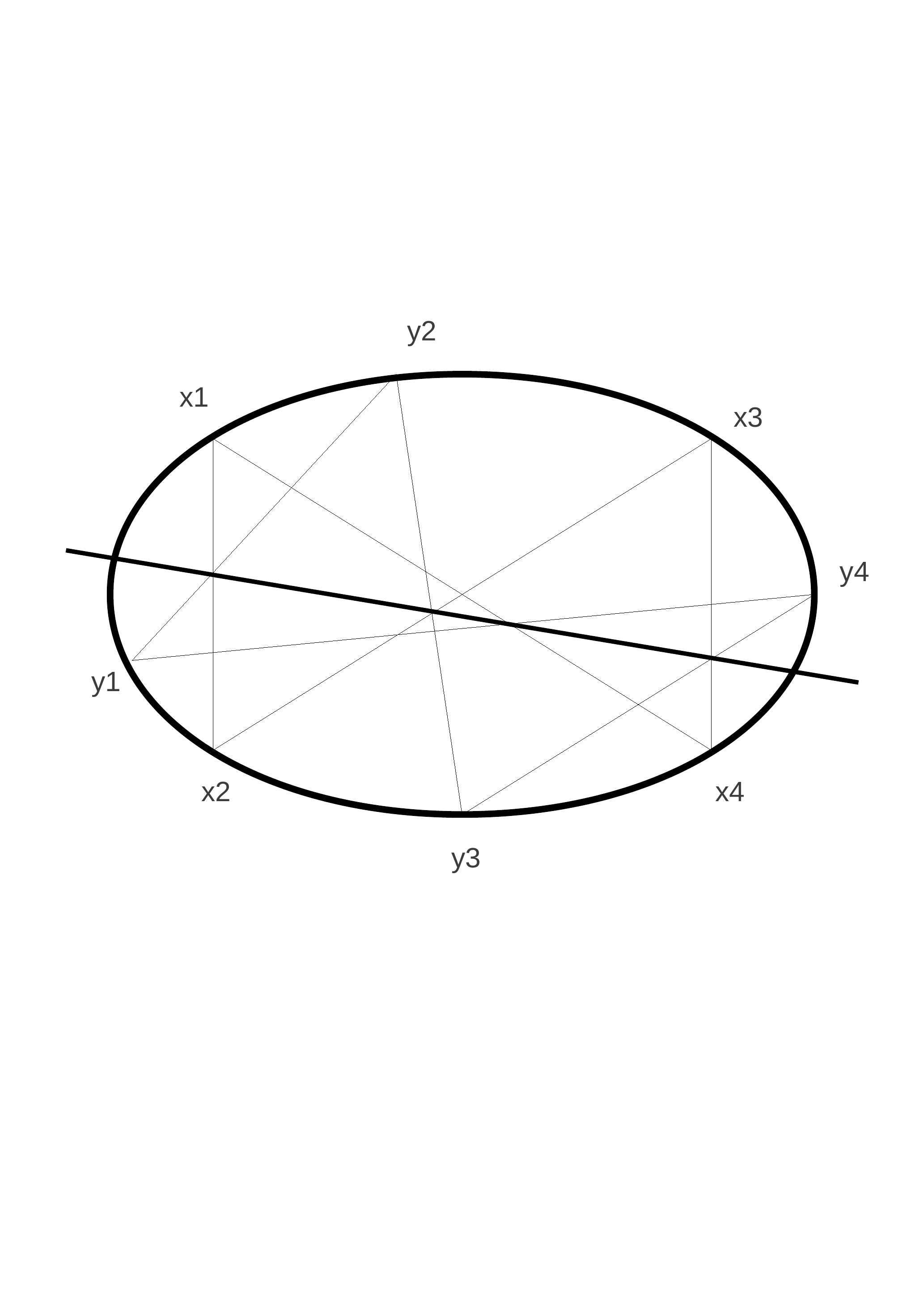}
    \caption{Four aligned centers. }
  \end{figure}

We need only   to prove the result for four involutions. Indeed let us verify that we can reduce the general case ($n\ge 4$) to three or four involutions. 
Consider $n> 4$ involutions.
Any group of three terms among $u_{i-1}, u_i, u_{i+1}$ for $i=2, \cdots, n-2$ gives a new involution with a center on the same line.
So when $n$ is odd it remains three involutions after reductions $u u_{n-1} u_{n}$. Since the product is  an involution the centers are aligned.
But the centers of $u$ and $u_{n-1}$ are already on the line $L$. Then the center or $u_n$ is also on $L$.
When $n$ is even it remains four involutions after reduction  $u u_{n-2} u_{n-1} u_{n}$ with the first three centers on $L$.

\smallskip

So let us consider the case $u_1 u_{2} u_{3} u_{4}=v$ with $v$ an involution. We have $u_1 v=u_2 u_3 u_4$.
The center $x_4$ belongs to $(x_2x_3)$ if and only if $u_2 u_3 u_4$ is an involution, i.e. if and only if  $u_1v$ is an involution. 
To prove it  let us show that $u_1 v=v u_1$. 
Since $x_1,x_2,x_3$ are aligned the product $u_1u_2u_3$ is involutive, so $u_1u_2u_3=u_3u_2u_1$.
Then $u_2u_3=u_1u_3u_2u_1$ and $u_1v=(u_2u_3)u_4=(u_1u_3u_2u_1)u_4$. Since
$u_3u_2u_1=u_1u_2u_3$ we obtain 
$$u_1v=(u_1u_3u_2u_1)u_4=u_1(u_1u_2u_3)u_4=u_1v.$$
\end{proof}

\subsection{Projective duality and involutions}
All the results obtained above can be \textit{dualized} by considering polar lines of  points and poles of lines with respect to the smooth conic 
$D$. By this way any inscribed polygon into $D$   induces a circumscribed polygon (with the same number of sides) around $D$.
 Even if M\"obius theorem was certainly  \textit{dualized} by M\"obius himself, 
we write one more time this dual version below.

\begin{thm}[dual M\"obius]
\label{dmob}
Let $L_1\cup \cdots \cup L_{2n}$ be a polygon tangent circumscribed to the smooth conic $D$.
If the diagonals joining  $L_i\cap L_{i+1}$ to $L_{i+n}\cap L_{i+1+n}$  for $i=1, \cdots, n-1$ are concurrent lines then 
 the same is true for the remaining diagonal joining $L_{2n}\cap L_{1}$ to $L_{n}\cap L_{n+1}$.
\end{thm}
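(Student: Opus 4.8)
The plan is to derive this statement as the projective dual of M\"obius theorem (Theorem~\ref{mob}), exactly in the spirit of the preceding subsection. The key is to exploit the pole--polar correspondence with respect to $D$: a point $p$ corresponds to its polar line $\ell_p$, a line $L$ corresponds to its pole $c_L$, and incidence is reversed, so $p\in L$ if and only if $c_L\in\ell_p$. Moreover, three points are collinear if and only if their three polar lines are concurrent, and dually. This dictionary is an involution on the set of lines and points and preserves the conic $D$ itself (a tangent line corresponds to its point of tangency).

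First I would translate the data. Given a polygon $L_1\cup\cdots\cup L_{2n}$ circumscribed to $D$, each side $L_i$ is tangent to $D$, hence is the polar line of a point $x_i\in D$, namely its point of tangency; so under duality the circumscribed polygon becomes the inscribed polygon with vertices $x_1,\dots,x_{2n}$ on $D$. Next, the vertex $L_i\cap L_{i+1}$ is the intersection of the polars of $x_i$ and $x_{i+1}$, hence its polar line is precisely the chord $(x_ix_{i+1})$. Therefore the diagonal joining $L_i\cap L_{i+1}$ to $L_{i+n}\cap L_{i+1+n}$ is a line whose pole is the intersection point of the two chords $(x_ix_{i+1})$ and $(x_{i+n}x_{i+1+n})$, which is exactly the M\"obius point $a_i=(x_ix_{i+1})\cap(x_{i+n}x_{i+1+n})$ in the even case $n=2m$. (In the odd case the indices pair up as $(x_nx_1)$ with $(y_ny_1)$ in the notation of Theorem~\ref{mob}, and I would simply match the two conventions; once the relabeling $x_i,x_{i+n}\leftrightarrow x_i,y_i$ is made the correspondence is literal.)

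With this dictionary the statement follows immediately. The hypothesis that the $n-1$ diagonals for $i=1,\dots,n-1$ are concurrent lines says, under duality, that their $n-1$ poles $a_1,\dots,a_{n-1}$ are collinear points. By M\"obius theorem the remaining point $a_n$, the pole of the last diagonal, is then also on that same line; dualizing back, the last diagonal passes through the common point dual to that line, i.e. it is concurrent with the others. Since duality reverses concurrence and collinearity symmetrically, the ``remaining point'' of M\"obius becomes the ``remaining diagonal'' here, and the equivalence is exact.

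I do not expect any genuine obstacle here, since the proof is purely a bookkeeping application of projective duality to an already-proved theorem; the only point requiring care is the index matching between the two formulations, in particular checking that the even/odd case distinction in Theorem~\ref{mob} is respected when the diagonal pairs $L_i\cap L_{i+1}$ with $L_{i+n}\cap L_{i+1+n}$. I would verify that the last diagonal, joining $L_{2n}\cap L_1$ to $L_n\cap L_{n+1}$, dualizes to the M\"obius point $a_n$ and not to one of the earlier $a_j$, which is clear from the index arithmetic modulo $2n$. Everything else is forced by the duality dictionary recorded at the start of the subsection.
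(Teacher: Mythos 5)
Your strategy---dualize Theorem \ref{mob} through the pole--polar correspondence with respect to $D$---is exactly the paper's (the paper offers no separate proof of Theorem \ref{dmob}; it is presented as an immediate consequence of the duality dictionary you set up), and your dictionary is correct: tangent line $\leftrightarrow$ point of tangency, vertex $L_i\cap L_{i+1}$ $\leftrightarrow$ chord $(x_ix_{i+1})$, diagonal $\leftrightarrow$ intersection point of two chords, concurrence $\leftrightarrow$ collinearity. For $n$ odd the correspondence is indeed literal: with $y_j=x_{j+n}$ the $2n$ chords of the inscribed $2n$-gon are exactly the M\"obius chords, the first $n-1$ diagonals dualize to $a_1,\dots,a_{n-1}$, and the last one to $(x_ny_1)\cap(y_nx_1)=a_n$.

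The step you flag as ``the only point requiring care''---the even/odd index matching---is, however, precisely where the argument fails, and it is not ``clear from the index arithmetic modulo $2n$.'' The last diagonal, joining $L_{2n}\cap L_1$ to $L_n\cap L_{n+1}$, dualizes to $(x_{2n}x_1)\cap(x_nx_{n+1})=(y_nx_1)\cap(x_ny_1)$ for \emph{every} $n$, i.e.\ always to the odd-case closing point of Theorem \ref{mob}; when $n=2m$ the theorem instead asserts collinearity of the different point $(x_nx_1)\cap(y_ny_1)$, so \ref{mob} cannot be invoked. This is not a relabeling issue: for even $n$ the M\"obius chords close up into two disjoint $n$-cycles (one on the $x$'s, one on the $y$'s), so the dual of Theorem \ref{mob} is a statement about \emph{two} circumscribed $n$-gons, not about one circumscribed $2n$-gon. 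One can check the discrepancy is real: identify $D$ with $\p^1$ and normalize $L\cap D=\{0,\infty\}$, so that a Fr\'egier involution with center on $L$ is $z\mapsto c/z$ and $d_i\in L$ iff $x_ix_{i+1}=y_iy_{i+1}$; setting $s_j=y_j/x_j$ the hypotheses give $s_js_{j+1}=1$, hence $s_n=s_1^{(-1)^{n-1}}$, while $(y_nx_1)\cap(x_ny_1)\in L$ iff $s_n=s_1$. For odd $n$ this is automatic; for even $n$ it forces $s_1^2=1$, which fails for a generic configuration. So your proof establishes the statement only for odd $n$; for even $n$ either the statement must be reformulated as the genuine dual of \ref{mob} (two $n$-gons) or a different argument is required. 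In fairness, the paper's own one-line justification is silent on exactly this point.
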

\begin{rem}
For $n=3$ it is Brianchon theorem.
\end{rem}
Proposition \ref{alignes} implies that one can construct an inscribed polygon in $D$ when  $n$ aligned points $c_1,\cdots, c_n$ (not on $D$) are given. 
Indeed let $x$ be a point on $D$ and let us take successively its  images
by the involutions $u_i$ associated to the centers $c_i$: 
$$x, u_1(x), (u_2 u_1)(x) , \cdots , y=(u_{2n+1} \cdots  u_1)(x). $$
Then let us take successively the  images of $y$ by the involutions $u_i$:
$$u_1(y), (u_2 u_1)(y) , \cdots , (u_{2n+1} \cdots  u_1)(y). $$
Since the product is involutive the process stops and  we have:
 $$(u_{2n+1} \cdots  u_1)(y)=(u_{2n+1} \cdots  u_1)^2(x)=x.$$
In other words,  from a general  point on $D$ we can draw by this method an inscribed polygon with $4n+2$ sides. 
Dualizing this statement, we verify the following proposition: 
\begin{pro}[dual version of proposition \ref{alignes}]
\label{dalignes}
Let us consider $2n+1$ concurrent lines $L_i$ meeting a smooth conic $D$ in  $4n+2$ distinct points. Then take any point $P_1$
on $L_1$ and draw a tangent to $D$ from this point. This tangent cuts $L_2$ in one point $P_2$. Let us draw successively $P_i\in L_i$ for $1\le i\le 2n+1$
and $P_{2n+1+j}\in L_j$ for $1\le j\le 2n+1$. 
 Then the line 
$(P_1P_{4n+2})$ is tangent to $D^{\vee}$.
\end{pro}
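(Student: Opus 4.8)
The plan is to read this statement as the projective dual, with respect to the polarity defined by $D$, of Proposition \ref{alignes} together with the inscribed $(4n+2)$-gon construction recalled just above. Recall that the polarity $x\mapsto \mathrm{pol}(x)$ is incidence-reversing: a point lies on a line if and only if the pole of the line lies on the polar of the point. Since the lines $L_1,\dots,L_{2n+1}$ are concurrent, say through a common point $p_0$, their poles $c_i=\mathrm{pol}(L_i)$ all lie on the single line $\ell=\mathrm{pol}(p_0)$, so that $c_1,\dots,c_{2n+1}$ are aligned; this is precisely the hypothesis of Proposition \ref{alignes}. To each center $c_i$ I attach the associated Fr\'egier involution $u_i$ on $D$, and I write $t_a$ for the tangent line to $D$ at a point $a\in D$.

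The main step is to translate the tangent construction into the language of these involutions. Each $P_j$ is the intersection of two tangent lines to $D$: the one arriving from $P_{j-1}$ and the one leaving towards $P_{j+1}$. Writing $a_{j-1}$ and $a_j$ for their respective contact points, we have $P_j=t_{a_{j-1}}\cap t_{a_j}$, which is exactly the pole of the chord $(a_{j-1}a_j)$. By the polarity, $P_j\in L_j$ if and only if this pole lies on $L_j$, i.e. if and only if the chord $(a_{j-1}a_j)$ passes through $c_j$, which by definition of the Fr\'egier involution means $a_j=u_j(a_{j-1})$. (The fact that one must choose at $P_j$ the \emph{second} tangent, distinct from the incoming one, is what produces an involution rather than the identity.) Running the indices cyclically according to $P_{2n+1+j}\in L_j$, the contact points therefore satisfy
\begin{equation*}
a_{4n+2}=(u_{2n+1}\cdots u_1)^2(a_0)=v^2(a_0),\qquad v:=\prod_{i=1}^{2n+1}u_i .
\end{equation*}

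Now I invoke Proposition \ref{alignes}: since $c_1,\dots,c_{2n+1}$ are aligned, $v$ is an involution, hence $v^2=I$ and $a_{4n+2}=a_0$. Consequently the last tangent line $t_{a_{4n+2}}$ coincides with $t_{a_0}$. Since $P_1=t_{a_0}\cap t_{a_1}$ and $P_{4n+2}=t_{a_{4n+1}}\cap t_{a_{4n+2}}=t_{a_{4n+1}}\cap t_{a_0}$, both $P_1$ and $P_{4n+2}$ lie on $t_{a_0}$. Therefore the line $(P_1P_{4n+2})$ is exactly the tangent $t_{a_0}$ to $D$, that is, a point of $D^{\vee}$, which is the asserted conclusion.

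The genuinely routine parts are the nondegeneracy checks guaranteeing that the $u_i$ are honest involutions and that no chord or tangent degenerates, all of which follow from the standing hypothesis that the $4n+2$ points of intersection of the $L_i$ with $D$ are distinct. The point demanding the most care is the bookkeeping of the duality dictionary together with the cyclic indexing, so that the closing relation $v^2=I$ places $P_1$ and $P_{4n+2}$ on one and the same tangent line $t_{a_0}$; once this is set up correctly, the statement is a direct transcription of Proposition \ref{alignes}.
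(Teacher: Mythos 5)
Your proof is correct and follows essentially the same route as the paper: the paper builds the inscribed $(4n+2)$-gon from the chain of Fr\'egier involutions, uses Proposition \ref{alignes} to get $v^2=I$, and then dualizes, while you simply run the same duality dictionary in the opposite direction, spelling out explicitly that $P_j\in L_j$ corresponds to the chord of contact points passing through the pole $c_j$. The only difference is that you make the dualization step explicit where the paper leaves it as ``dualizing this statement.''
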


\section{A Poncelet  theorem for lines}
The following  theorem is not a consequence nor of the well known 
Poncelet closure theorem (except when the configuration consists in two lines) neither of Darboux theorem (the configuration is not 
a Poncelet curve associated to $D$ and described in \cite{Tr}).
We say that a polygon with $2n$ sides joining $2n$ vertices 
is well inscribed in a configuration $\mathcal{L}_n$ of $n$
lines when  each line of the configuration contains exactly two  vertices.
\begin{thm}
\label{produitdedroites}
Let  $\mathcal{L}_n$ be a configuration of  $n$ lines  and   $D$ a smooth conic in  $\p^2$. 
If it exists a polygon with  $2n$ sides well inscribed into $\mathcal{L}_n$ and circumscribed around   $D$
 then there are  infinitely many  such polygons. In particular a general point in  $\mathcal{L}_n$ is a vertex of such a polygon.
\end{thm}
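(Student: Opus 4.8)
The plan is to dualize the configuration exactly as described in the introduction and reduce the closure statement to the fact, already established, that a product of Fr\'egier's involutions is controlled by the alignment of their centers. Concretely, each line $L_i$ of the configuration $\mathcal{L}_n$ is the polar line of a pole $c_i$ with respect to the smooth conic $D$. The hypothesis that one polygon with $2n$ sides is well inscribed in $\mathcal{L}_n$ and circumscribed around $D$ dualizes to the existence of a $2n$-gon inscribed in $D$ whose consecutive vertices lie on tangent lines that pass, in pairs, through the poles $c_1,\dots,c_n$. To each pole $c_i$ I associate the Fr\'egier involution $u_i$ on $D$ with center $c_i$. Tracing the polygon, the map carrying the first vertex to the vertex reached after going around once is precisely the ordered product $v=u_{i_1}u_{i_2}\cdots u_{i_{2n}}$, where each pole appears twice because each line meets the polygon in two vertices. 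The key point is that the closure of \emph{one} inscribed polygon means $v$ has a fixed point on $D$.

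The heart of the argument is then to show that $v$ is an involution. First I would observe that a product of $2n$ Fr\'egier involutions lies in $\mathrm{PGL}(2,\C)$, so it has at most two fixed points unless it is the identity. The closure hypothesis gives us at least one fixed point $x$. To upgrade ``one fixed point'' to ``involution'' I would look for two further points interchanged by $v$, so that $x$ together with those two points are three fixed points of $v^2$, forcing $v^2=I$ by the three-fixed-point principle used repeatedly above. The natural candidates are the two intersection points $\{y,z\}=L\cap D$ where $L$ is some line determined by the centers; this is exactly the mechanism of propositions \ref{uvw2} and \ref{alignes}. Here, however, the centers need not be aligned, so I cannot invoke \ref{alignes} directly; instead the \emph{order} in which the two copies of each $c_i$ are visited is what makes the product involutive.

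Once $v$ is shown to be an involution, the porism follows immediately: an involution other than the identity has its action on $D$ completely determined, and every point of $D$ that is not one of its two fixed points is moved to a genuinely distinct point. Starting the construction from \emph{any} general point $x'\in D$ (dually, a general point of $\mathcal{L}_n$), the same product $v$ governs the polygon, and since $v^2=I$ the point returns to itself after one trip around: $v(v(x'))=x'$. Thus the polygon closes up for a general starting vertex, yielding infinitely many well-inscribed circumscribed polygons, and a general point of $\mathcal{L}_n$ is a vertex of one of them. Dualizing back from $D$ to $D^{\vee}$ restores the original picture of polygons inscribed in $\mathcal{L}_n$ and circumscribed around $D$.

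The main obstacle I anticipate is the middle step: proving that $v$ is involutive rather than merely having a fixed point. The clean fixed-point argument of proposition \ref{alignes} relied on the two points $L\cap D$ being swapped by $v$, which there came for free from the alignment of centers. In the present setting there is no line of centers, so I expect the real work to be showing that the specific pairing of the $2n$ tangency data into $n$ poles forces the existence of a $v$-antifixed pair $\{y,z\}$ (equivalently, that the closure at one point algebraically propagates, exactly as in the conversely-direction of proposition \ref{uvw2} where the identity $uvw=wvu$ was exploited). I would try to extract the needed symmetry of $v$ from the combinatorial structure of the $2n$-cycle, reducing the product of $2n$ involutions, via the three-involution reductions of theorem \ref{mob}, to a situation where proposition \ref{two} or \ref{uvw2} applies directly.
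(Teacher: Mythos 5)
Your overall strategy --- dualize, encode the polygon by Fr\'egier involutions, and close the porism with the three-fixed-point principle --- is indeed the paper's strategy, but the proposal breaks down at exactly the point you flag as ``the main obstacle,'' and the difficulty is largely an artifact of how you set up the product. You take $v=u_{i_1}u_{i_2}\cdots u_{i_{2n}}$ to be the \emph{full} trip around the polygon ($2n$ factors, each pole appearing twice) and aim to prove that $v$ is an involution. That is the wrong target: for a general starting point to close up after one circuit you need the full-trip map to be the \emph{identity}, not merely an involution; if $v^2=I$ but $v\neq I$, a general vertex returns only after two circuits, producing a $4n$-gon rather than a $2n$-gon. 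Your last paragraph slips on exactly this point: ``$v(v(x'))=x'$'' is two trips around, not one. Moreover, as you yourself observe, you have no mechanism to produce the two extra fixed points of $v^2$ --- there is no line of centers, hence no pair $\{y,z\}=L\cap D$ swapped by $v$ --- and the hoped-for reduction ``via the three-involution reductions of theorem \ref{mob}'' cannot get started, since M\"obius' theorem and proposition \ref{alignes} both require centers that are already aligned.

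The paper sidesteps all of this by taking $v=u_n\cdots u_1$ to be the \emph{half}-trip map, with only $n$ factors, one per pole. Then the closure of the single given polygon says precisely that $v^2$ fixes the starting vertex $x_1$ of the dual inscribed polygon, and likewise fixes $x_2=v(x_1)$; these two points are distinct (the polygon has $2n$ vertices, not $n$) and are interchanged by $v$, so neither is a fixed point of $v$ itself. Since $v\in\mathrm{PGL}(2,\C)$ always has some fixed point $x$, which is then a third fixed point of $v^2$ distinct from $x_1$ and $x_2$, one concludes $v^2=I$: the full-trip map is the identity, and the porism follows by running the construction from any general point. So the missing idea is not a hidden alignment of the centers but the factorization of the full circuit as the square of the half circuit: the hypothesis hands you the swapped pair $\{x_1,x_2\}$ for free, playing exactly the role that $L\cap D$ played in proposition \ref{alignes}.
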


\begin{proof}
 The given   polygon of $2n$ sides  well inscribed into  $\mathcal{L}_n$ and circumscribed 
to  $D$ 
corresponds by  duality (polarity) to an inscribed polygon into $D$. 
It gives  $2n$ points on  $D$ linked by  
$2n$ lines that are the polar lines of the considered $2n$ vertices in  $\mathcal{L}_n$. These $2n$ lines meet two by two in $n$ points 
$L_1^{\vee}, \cdots, L_n^{\vee}$ (poles of the  $n$ lines of the configuration). 

\smallskip

Let us show that the product $v=(u_n\cdots u_1)$ of the $n$ involutions $u_1, \cdots , u_n$ with respective centers $L_1^{\vee}, \cdots, L_n^{\vee}$ is involutive.
Let $x_1$ be an intersection point of $L_1\cap D$ and $x_2=v(x_1)$.
Following the sides of this inscribed polygon,  we have 
$v^2(x_{1})=x_{2}$. We have, in the same way, 
$v^2(x_{2})=x_{2}$. Since the inscribed polygon has $2n$ vertices and not only $n$, these two fixed points of the automorphism 
$v^2$ do not co\"{\i}ncide; indeed they are  exchanged by  $v$. Let  $x$ be a fixed point of the product   $v$.
This point  $x$ is also a fixed point of    
$v^2$. Since  $x_{1}$ and  $x_{2}$ are exchanged by   $v$ they do not 
co\"{\i}ncide with   $x$. It implies that  
$v^2$ has three fixed points, i.e. 
$v^2= I$.

\smallskip

Then, a polygon constructed  from a general point $p\in D$ by joining the vertices  
$$\lbrace p,u_1(p),(u_2  u_1)(p),\cdots , (u_{n}   \cdots    u_1)(p), \cdots ,  
(u_{n-1}  \cdots   u_1  u_{n}   \cdots   u_1)(p) \rbrace $$  is inscribed 
in  $D$ and corresponds by duality to an inscribed polygon in $\mathcal{L}_n$  circumscribed around $D$.
\end{proof}
\subsection{Poncelet theorem for singular conics}
Assume that $u_{2i}=u$ and $u_{2i+1}=v$ and let $x$ and $y$ be their respective centers.
We can consider the two polar lines $L_x=x^{\vee}$ and $L_y=y^{\vee}$. Then theorem \ref{produitdedroites}  (with $u_{2i}=u$ and $u_{2i+1}=v$) implies 
 that  $(uv)^n=I$ if and only if there exists a polygon with $2n$ sides inscribed in $L_x\cup L_y$ and circumscribed around $D$.
In that case since an union of two lines is a conic it is a consequence of Poncelet theorem (see \cite{Va}, thm.2.2).

\begin{figure}[h!]
    \centering
    \includegraphics[height=4.5cm]{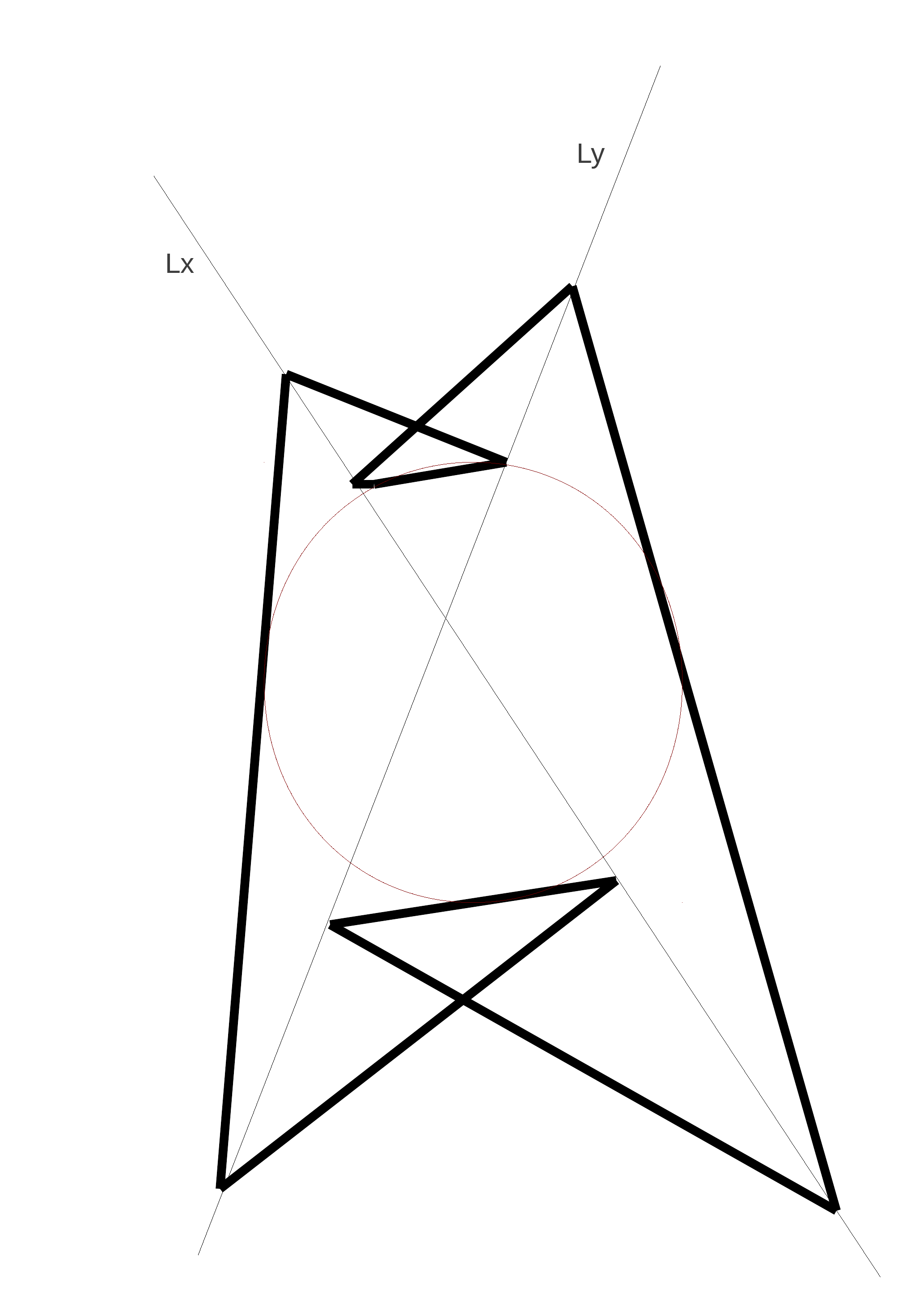}
    \caption{Octagon inscribed in $L_x\cup L_y$. }
  \end{figure}
This situation can be described in an elementary way. Since $\mathrm{PGL(2,\C)}$ acts transitively on triplets of points in $\p^1$
we can choose three among the four fixed points of $u$ and $v$. Then we will obtain a good matrix description. 
 Let us introduce first 
 a family of polynomials\footnote{Quite similar than Fibonacci polynomials.} on the affine line: 

\smallskip

$P_0(x) =1, P_1(x)=x$ and for $n\ge 2$, $P_n(x)=xP_{n-1}(x)-P_{n-2}(x)$. 

\smallskip
 
We can give now  a simple characterization for an union of two lines to be Poncelet associated to a smooth conic.
\begin{pro} Let  $\{ 1,-1\}$ be the fixed points of $u$ and $\{0,2/x\}$  be the fixed points of   $v$. Then,
$$ (uv)^n= I \Leftrightarrow P_{n-1}(x)=0\,\, \mathrm{and} \,\, P_{n-2}(x)\neq 0.$$
\end{pro}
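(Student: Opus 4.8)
The plan is to reduce everything to a single $2\times 2$ matrix computation. First I would record explicit traceless representatives of the two involutions: an element of $\mathrm{PGL}(2,\C)$ is an involution exactly when it has a traceless matrix, and then its fixed points are the roots of the associated quadratic $cz^2-2az-b=0$. Imposing the fixed points $\{1,-1\}$ and $\{0,2/x\}$ produces, up to scalars,
\[
u=\begin{pmatrix} 0 & 1 \\ 1 & 0 \end{pmatrix}, \qquad v=\begin{pmatrix} 1 & 0 \\ x & -1 \end{pmatrix},
\]
and one checks directly that these have the prescribed fixed points (for $u$, the roots of $z^2=1$; for $v$, the roots of $xz^2-2z=0$).

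Next I would set $M:=uv=\begin{pmatrix} x & -1 \\ 1 & 0 \end{pmatrix}$ and observe that $\operatorname{tr} M=x$ and $\det M=1$, so the Cayley--Hamilton relation reads $M^2=xM-I$. This is precisely the recurrence defining the $P_n$, and an immediate induction gives, for all $n\ge 1$,
\[
M^n=P_{n-1}(x)\,M-P_{n-2}(x)\,I,
\]
with the convention $P_{-1}=0$ (base cases $M^1=P_0(x)M-P_{-1}(x)I$ and $M^2=P_1(x)M-P_0(x)I$). The inductive step is just $M^{n+1}=M\cdot M^n=P_{n-1}(x)(xM-I)-P_{n-2}(x)M=P_n(x)M-P_{n-1}(x)I$.

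Finally I would translate this matrix identity into the group. The only point to watch is that $(uv)^n=I$ in $\mathrm{PGL}(2,\C)$ means that $M^n$ is a \emph{nonzero scalar} matrix, not literally the identity matrix. Since $M$ is visibly not scalar, the expression $P_{n-1}(x)M-P_{n-2}(x)I$ is scalar if and only if $P_{n-1}(x)=0$; and when $P_{n-1}(x)=0$ it equals $-P_{n-2}(x)I$, which is a nonzero (hence invertible) scalar matrix precisely when $P_{n-2}(x)\neq 0$. This yields the stated equivalence.

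I expect no serious obstacle here: once the matrices are fixed the argument is routine. The one subtlety worth a remark is that the second condition $P_{n-2}(x)\neq 0$ is in fact automatic given $P_{n-1}(x)=0$, since $\det M^n=(\det M)^n=1$ forces $P_{n-2}(x)^2=1$; equivalently, two consecutive polynomials $P_{n-1}$ and $P_{n-2}$ share no common root. I would nonetheless keep it in the statement, because it is exactly the bookkeeping that records the nonvanishing of the scalar $-P_{n-2}(x)$, i.e. that $M^n$ represents the identity of $\mathrm{PGL}(2,\C)$ rather than the zero matrix.
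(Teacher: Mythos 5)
Your proof is correct and follows essentially the same route as the paper: the same representative matrices $M_u$ and $M_v$, and the same closed form $M^n=P_{n-1}(x)M-P_{n-2}(x)I$ (which is exactly the matrix the paper writes out entrywise), with the Cayley--Hamilton recurrence just making explicit the induction the paper leaves implicit. Your closing remark that $\det M^n=1$ forces $P_{n-2}(x)=\pm1$, so the second condition is automatic, is a nice touch consistent with the paper's final sentence dropping that condition.
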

\begin{proof}
Two representatives matrices of   $u$ and   $v$ are  
$$M_u=\left( 
\begin{array}{cc}
 0 & 1\\
 1 & 0 
\end{array}
\right) \,\, \mathrm{and}\,\,
M_v=\left( 
\begin{array}{cc}
 1 & 0\\
 x & -1
\end{array}
\right).$$
The product is given by the matrix   
$$ (M_uM_v)^{n}=\left( 
\begin{array}{cc}
 xP_{n-1}(x)-P_{n-2}(x) & -P_{n-1}(x)\\
 P_{n-1}(x) & -P_{n-2}(x)
\end{array}
\right)   $$ and this matrix is a multiple of the matrix  identity if and only if  $$P_{n-1}(x)=0.$$
\end{proof}

\end{document}